\newcounter{thm}
\newtheorem{prob}[thm]{Problem}
\newtheorem{lemma}{Lemma}[section]
\newtheorem{theorem}{Theorem}[section]
\newtheorem{definition}{Definition}[section]
\newtheorem{remark}{Remark}[section]
\newtheorem{approximation}{Approximation}[section]
\def\BibTeX{{\rm B\kern-.05em{\sc i\kern-.025em b}\kern-.08em
    T\kern-.1667em\lower.7ex\hbox{E}\kern-.125emX}}
\newcommand{\flexbrac}[1]{\if\relax\detokenize{#1}\relax \else (#1) \fi}
\newcommand{\flexcomma}[1]{\if\relax\detokenize{#1}\relax \else ,#1 \fi}
\newcommand{\abs}[1]{\lvert #1 \rvert}
\newcommand{\cA}{\mathcal{A}}
\newcommand{\cG}{\mathcal{G}}
\newcommand{\cM}{\mathcal{M}}
\newcommand{\cR}{\mathcal{R}}
\newcommand{\cV}{\mathcal{V}}
\begin{document}

\title{\LARGE \bf A Time-invariant Network Flow Model\\ for Two-person Ride-pooling Mobility-on-Demand
}

\author{Fabio Paparella, Leonardo Pedroso, Theo Hofman, Mauro Salazar 
\thanks{Mechanical Engineering, Control System Techonology, Eindhoven University of Technology, PO Box 513 5600 MB Eindhoven, The Netherlands
        {\tt\small \{f.paparella,l.pedroso,t.hofman,m.r.u.salazar\} @tue.nl}}%
}


\maketitle
\thispagestyle{empty}
\pagestyle{empty}

\begin{abstract}
This paper presents a time-invariant network flow model capturing two-person ride-pooling that can be integrated within design and planning frameworks for Mobility-on-Demand systems.
In these type of models, the arrival process of travel requests is described by a Poisson process, meaning that there is only statistical insight into request times, including the probability that two requests may be pooled together.
Taking advantage of this feature, we devise a method to capture ride-pooling from a stochastic mesoscopic perspective.
This way, we are able to transform the original set of requests into an equivalent set including pooled ones which can be integrated within standard network flow problems, which in turn can be efficiently solved with off-the-shelf LP solvers for a given ride-pooling request assignment.
Thereby, to compute such an assignment, we devise a polynomial-time algorithm that is optimal w.r.t.\ an approximated version of the problem.
Finally, we perform a case study of Sioux Falls, South Dakota, USA, where we quantify the effects that waiting time and experienced delay have on the vehicle-hours traveled.
Our results suggest that the higher the demands per unit time, the lower the waiting time and delay experienced by users. In addition, for a sufficiently large number of demands per unit time, with a maximum waiting time and experienced delay of 5 minutes, more than 90\% of the requests can be pooled. 
\end{abstract}

\section{Introduction}
Ride-sharing is a service that is revolutionizing urban transportation.
Within this service, ride-pooling is the concept of having multiple users traveling at the same time on a single vehicle at lower costs, e.g., emissions, energy consumption, fleet size, and also the cost of the ride charged to the user. Nevertheless, these improvements come at the expense of additional waiting time and delays caused by detours.
Ride-pooling is a difficult problem to deal with due to its combinatorial nature. However, sometimes it is enough to study a ride-sharing system from a macroscopic point of view, especially when dealing with mobility planning or design~\cite{SalazarLanzettiEtAl2019,ZardiniLanzettiEtAl2020b,LukeSalazarEtAl2021}. For this reason, the microscopic nature of ride-pooling  is, at first sight, incompatible with an approach on a different scale. In this paper, we propose a framework to deal with ride-pooling from a mesoscopic point of view by moving from a deterministic to a stochastic approach. In particular, we devise a framework to easily incorporate ride-pooling into a linear time-invariant multi-commodity network flow model, also known as traffic flow model, that is a mesoscopic modeling framework usually used for mobility planning and design. 

\textit{Related Literature:} 
This paper pertains to the research streams of traffic flow models and ride-pooling, that we review in the following.
One of the approaches to characterize and control ride-sharing systems is the multi-commodity network flow model, that is suited for easy implementation of many constraints of different nature and can be efficiently solved with commercial solvers.  This model has been used for multiple purposes, from minimizing electricity costs~\cite{RossiIglesiasEtAl2018b,BoewingSchifferEtAl2020} to joint optimization with public transport~\cite{SalazarLanzettiEtAl2019,Wollenstein-BetechSalazarEtAl2021} and the power grid~\cite{RossiZhangEtAl2017,SpieserTreleavenEtAl2014,IglesiasRossiEtAl2018}. For example, Luke et al.~\cite{LukeSalazarEtAl2021} proposed a joint optimization framework for the siting and sizing of the charging infrastructure for an electric ride-sharing system, while in~\cite{PaparellaChauhanEtAl2023} we proposed a simplified, yet more tractable version of the same problem. Yet in all these models the assumption of one person per vehicle is made.

Ride-pooling has been extensively studied. 
Alonso-Mora et al.~\cite{Alonso_Mora_2017} conceived the vehicle group assignment algorithm, which optimally solves the ride-sharing problem with high capacity vehicles in a microscopic setting.
In~\cite{SantiRestaEtAl2013,JintaoHaiEtAl2020} the benefits of vehicle pooling and the pricing and equilibrium in on-demand ride-pooling markets were analyzed, respectively.
Fieldbaum et al.~\cite{FielbaumBaiEtAl2021} studied ride-pooling considering that users can be picked-up and dropped-off within a walkable distance, while in~\cite{FielbaumKucharskiEtAl2022} they examine how to split costs between users that share the same ride. In~\cite{TsaoMilojevicEtAl2019} a time-expanded network flow model is leveraged to compute the optimal routes of a mobility system that allows for ride-pooling. However, in all of these papers the ride-pooling problem has been studied from a microscopic perspective, whereby each request is considered individually.

To the best of the authors' knowledge, a mesoscopic time invariant network flow model that accounts for ride-pooling has not yet been proposed.

\textit{Statement of Contributions:} The main contributions of this paper are threefold. First, we propose a framework to capture ride-pooling, a microscopic combinatorial phenomenon, in a time-invariant network flow model, whereby the arrival process in stochastic. {Second, within the proposed framework, we devise a method to compute a ride-pooling request assignment that is optimal w.r.t. a relaxed version of the minimum fleet size problem.} Third, we showcase our framework with a case study of Sioux Falls, South Dakota, USA, where we show that the inclusion of ride-pooling can significantly benefit such ride-sharing mobility systems.

\textit{Organization:} The remainder of this paper is structured as follows: Section \ref{sec:model} introduces the multi-commodity traffic flow problem and the framework to capture ride-pooling. Section \ref{sec:res} details the case study of Sioux Falls.
Last, in Section \ref{sec:conc}, we draw the conclusions from our findings and provide an outlook on future research endeavors.

\emph{Notation:} Throughout this paper, we denote the vector of ones, of appropriate dimensions, by $\mathds{1}$. The $i$th component of a vector $v$ is denoted by $v_i$ and the entry $(i,j)$ of a matrix $A$ is denoted by $A_{ij}$. The cardinality of set $\mathcal{S}$ is denoted by $\abs{\mathcal{S}}$.
\section{Ride-pooling Network Flow Model}\label{sec:model}
In this section, we first introduce the standard network   traffic flow model~\cite{PavoneSmithEtAl2012}. Then, we extend the formulation to take into account ride-pooling, and finally present a brief discussion on the model.
\subsection{Time-invariant Network Flow Model} 
We model the mobility system as a multi-commodity network flow model, similar to the approaches of~\cite{SalazarLanzettiEtAl2019,LukeSalazarEtAl2021,PaparellaChauhanEtAl2023,RossiIglesiasEtAl2018b,PaparellaSripanhaEtAl2023}. The transportation network is a directed graph $\mathcal{G} = (\mathcal{V}, \mathcal{A})$. {It consists of a set of vertices $\mathcal{V} := \{1,2,...,\abs{\mathcal{V}}\}$,} representing the location of intersections on the road network, and {a set of arcs $\mathcal{A} \subseteq \mathcal{V} \times \mathcal{V}$}, representing the road links between {intersections}. We indicate ${B \in \{-1,0,1\}^{\abs{\cV} \times \abs{\cA}}}$ as the incidence matrix~\cite{Bullo2018} of the road network $\mathcal{G}$. Consider an arbitrary arc indexing of natural numbers $\{1,\ldots,\abs{\cA}\}$, then $B_{ip} = -1$ if the arc indexed by $p$ is directed towards vertex $i$, $B_{ip} = 1$ if the arc indexed by $p$ leaves vertex $i$,  and $B_{ip}=0$ otherwise. We denote $t$ as the vector whose entries are the travel time $t_{a}$ required to traverse each arc $a\in \cA$, ordered in accordance with the arc ordering of $B$, which we assume to be constant. Similarly to \cite{PavoneSmithEtAl2012}, we define travel requests as follows:
\begin{definition}[Requests]
	A travel request is defined as the tuple $r = (o,d,\alpha) \in \mathcal{V} \times \mathcal{V} \times \mathbb{R}_{>0}$, in which $\alpha$ is the number of users traveling from the origin $o$ to the destination $d \neq o$ per unit time.  Define the set of requests as $\mathcal{R} := \{r_m\}_{m\in \mathcal{M}}$, where $\mathcal{M} = \{1,\ldots,M\}$.
\end{definition} 

We assume, without any loss of generality, that the origin-destination pairs of the requests $r_m \in \mathcal{R}$ are distinct. 
In this paper, we distinguish between active vehicle flows, which correspond to the flows of vehicles serving users whether they are ride-pooling or not, and rebalancing flows which correspond to the flows of empty vehicles between the drop-off and pick-up vertices of consecutive requests. We define the active vehicle flow induced by all the requests that share the same origin $i \in {\mathcal{V}}$ as vector $x^{i}$, where element $x^{i}_{a}$ is the flow on arc $a \in \cA$, ordered in accordance with the arc ordering of $B$. The overall active vehicle flow is a {matrix $X \in \mathbb{R}^{\abs{\cA} \times \abs{\cV}}$ defined as $X := \left[x^{1}\  x^2 \, \dots \,x^{\abs{\cV}}\right]$}. The rebalancing flow across the arcs is denoted by $x^{\mathrm{r}} \in \mathbb{R}^{\abs{\cA}}$. In the following, we define the network flow problem.
\begin{prob}[Multi-commodity Network Flow Problem]\label{prob:main}
	Given a road graph $\cG$ and a demand matrix {$D$}, the active vehicle flows $X$ and rebalancing flow $x^\mathrm{r}$ that minimize the cost in terms of overall travel time result from
	\begin{equation*}
		\begin{aligned}
			\min_{X}\; &J(X) =t^\top ( X \mathds{1} + x^\mathrm{r} )  \\
			\mathrm{s.t. }\; & BX = D \\
			&B ( X \mathds{1}+ x^\mathrm{r} )=0 \\
			& X, x^\mathrm{r} \geq 0,
		\end{aligned}
	\end{equation*}
	where the demand matrix $D \in \mathbb{R}^{\abs{\mathcal{V}} \times \abs{\mathcal{V}}}$ represents the requests between every pair of vertices, whose entries are
	\begin{equation}\label{eq:def_D}
		\!\!\!\!D_{ij} = \begin{cases}
			\alpha_m, &  \exists m \in \mathcal{M} : o_m = j \land d_m = i\\
			-\sum_{k\neq j} D_{kj}, & i  = j \\ 
			0, &   \mathrm{otherwise}.
		\end{cases}\!\!\!
	\end{equation}
\end{prob}
Since Problem~\ref{prob:main} is totally unimodular, $X$ and $x^\mathrm{r}$ can be decoupled and computed separately~\cite{Rossi2018}. The objective function can also be interpreted as the minimum fleet size required to implement the flows~\cite{PavoneSmithEtAl2012}. 


\subsection{Ride-pooling Time-invariant Network Flow Model}

In this paper, we propose a formulation to take into account ride-pooling without the need to change the original structure of the problem. We transform the original set of requests, portrayed by $D$, into an equivalent set of requests accounting for ride-pooling, portrayed by $D^\mathrm{rp}$. We define the ride-pooling network flow problem as follows:
\begin{prob}[Ride-pooling Network Flow Problem]\label{prob:rides}
	Given a road graph $\cG$ and a demand matrix $D^\mathrm{rp}$, the active vehicle flows $X$ and rebalancing flow $x^\mathrm{r}$ that minimize the cost in terms of overall travel time result from
	\begin{equation*}
		\begin{aligned}
			\min_{X}\; &J(X) = {t^\top (X \mathds{1} + x^r)} \\
			\mathrm{s.t. }\; & BX = D^\mathrm{rp} \\
			&B ( X \mathds{1}+ x^\mathrm{r} )= 0 \\
			& X, x^\mathrm{r} \geq 0.
		\end{aligned}
	\end{equation*}
\end{prob}

The ride-pooling demand matrix $ D^\mathrm{rp}$ in Problem~\ref{prob:rides}, which describes the pooling pattern, has to be determined according to four key conditions. First, the individual requests, described by $D$, must be served. Second, ride-pooling two requests is only spatially feasible if the detour travel time is not greater than a threshold $\bar{\delta} \in \mathbb{R}_{>0}$. Third, ride-pooling two requests is only temporally feasible if the waiting time for a request to start being served does not exceed a threshold $\bar{t} \in \mathbb{R}_{>0}$. Fourth, the requests are pooled to minimize the cost function of Problem~\ref{prob:rides} at its solution. Due to the combinatorial nature of such an endeavor, we relax the problem in order to attain a computationally tractable algorithm, according to the following approximation.


\begin{approximation}\label{approx}
For the purpose of computing the demand matrix $D^\mathrm{rp}$,  the cost function of Problem~\ref{prob:rides} is approximated by $\tilde{J}(X):=t^\top X \mathds{1}$.
\end{approximation}
This approximation makes sense in the context of the problem, since the active vehicle travel time $t^\top X \mathds{1}$ is usually dominant over the rebalancing travel time $t^\top x^\mathrm{r}$. In fact, in the simulations performed in Section~\ref{sec:res}, $t^\top x^\mathrm{r}$ accounts for less than $3\%$ of $J$ for every scenario studied. Crucially, leveraging Approximation~\ref{approx}, we can devise a polynomial-time algorithm to compute $D^\mathrm{rp}$ that is optimal w.r.t. the approximated version of the problem.


\subsection{Approximate  Computation of the Demand Matrix}

In this section, we present a framework to compute the demand matrix $D^\mathrm{rp}$ under Approximation~\ref{approx}.  


\subsubsection{Spatial Analysis of Ride-pooling}\label{sec:SD}
In this section, we analyze the feasibility and optimal configuration of ride-pooling two requests from a spatial perspective. First, we define $\delta$ as the delay experienced by each user, representing the time required to travel the additional detour distance w.r.t. the scenario without ride-pooling. If the delay experienced by any of the two users is higher than the threshold $\bar{\delta}$, then pooling the two requests is unfeasible. Second, 
given the feasible pooling itineraries, we analyze which one is the optimal, i.e., the best itinerary to serve the requests, and whether pooling is advantageous w.r.t. no pooling.
Consider two requests $r_m,r_n\in \cR$. To restrict this analysis to the spatial dimension, we temporarily make two key considerations, that we lift in Section~\ref{sec:STF}: i)~the requests $r_m$ and $r_n$ are made at the same time; and ii)~both requests have the same demand, which we set, without any loss of generality, to $\alpha=1$. 
\begin{figure}[t]
	\centering
	\includegraphics[width = 0.65\linewidth]{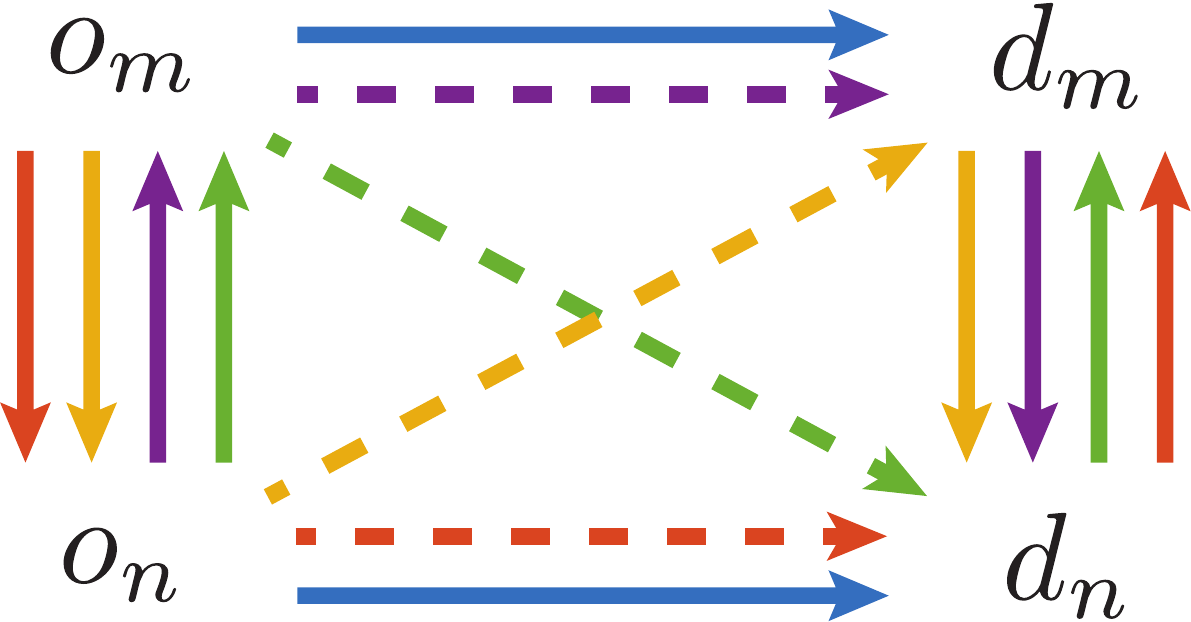}
	\caption{Distinct configurations for serving two requests $r_m,r_n \in \cR$. Each arrow represents a flow of $\alpha = 1$ vehicles. The dashed arrows represent a flow with two users, whilst the solid ones represent a flow with one user.}
	\label{fig:serve_conf}
\end{figure}

There are five different ways of serving two requests $r_m, r_n \in \cR$, as depicted in Fig.~\ref{fig:serve_conf}.
The goal is to assess whether it is feasible to ride-pool $r_m$ and $r_n$ and which is the best configurations among the five. Index each configuration with number $c \in \{0,\ldots,4\}$, with $c=0$ corresponding to no pooling. Each configuration can be split into either two or three equivalent travel requests, as shown in Fig.~\ref{fig:serve_conf}, each corresponding to an arrow. Denote the set of such equivalent requests for configuration $c$ as $\cR_{mn}^c$ ($\cR_{nm}^c = \cR_{mn}^c$) and we define $\Pi(\cR_{mn}^c)$ as the order of visited nodes. For each configuration $c$, one can now solve Problem~\ref{prob:rides}, under Approximation~\ref{approx},  with a simplified demand matrix $D^\mathrm{rp} = D^{mn,c}$ which is obtained from the set of requests $\cR_{mn}^c$ with \eqref{eq:def_D}, obtaining a flow $X^{mn,c} \in \mathbb{R}^{\abs{\mathcal{V}}\times \abs{\mathcal{V}}}$. 
The delay $\delta^{m,c}$ of request $r_m$ for a configuration $c$, is
\begin{equation*}
	\delta^{m,c} = \sum_{p \in {\pi^c_{mn}} } [t^\top X^{mn,c}]_p   - [t^\top X^{mn,0}]_{o_m},
\end{equation*}
where ${\pi^c_{mn}} \subseteq \Pi(\cR_{mn}^c)$ is the ordered set of nodes $\Pi(\cR_{mn}^c)$ from $o_m$ to the node before $d_m$. 
The feasible configurations are those whose delay of both users is below the threshold $\bar{\delta}$. Then, among the feasible ones, comprehending also the no pooling option, the optimal configuration is the one whose flow $X^{mn,c}$ achieves the lowest cost $\tilde{J}(X^{mn,\star})$. Henceforth, the simplified demand matrix of the optimal configuration for ride-pooling $r_m$ and $r_n$ is denoted by $D^{mn,\star}$.

\begin{remark} The demand matrix $D^{mn,c}$ contains either two or three equivalent travel requests. To reduce the computational load, Problem~\ref{prob:rides} can be computed for each equivalent request and stored separately. It amounts to solve Problem~\ref{prob:rides} $\abs{\cV}^2$ times. Since each has a computational complexity of $\mathcal{O} (\abs{\cV}^2)$, the overall computational complexity is $\mathcal{O} (\abs{\cV}^4)$. The procedure depends on the graph $\mathcal{G}$, meaning that the computations have to be performed only once.
\end{remark}
\begin{remark}
The procedure can be extended to account for the possibility of pooling three or more requests. In this case, the number of possible configurations increases, but the overall complexity remains $\mathcal{O} (\abs{\cV}^4)$.
\end{remark}


\subsubsection{Temporal Analysis of Ride-pooling}\label{sec:TD}
In this section, we analyze the temporal alignment of two requests for ride-pooling.  
We derive the probability of two requests taking place within the maximum waiting time, $\bar{t}$. As common in traffic flow models~\cite{PavoneSmithEtAl2012}, we consider that the arrival rate of a request $r_m \in \cR$ follows a Poisson process with parameter $\alpha_m$. Consider two requests $r_m, r_n \in \cR$. In the following lemma, we indicate the probability of 
the two events occurring within a maximum time window $\bar{t}$.
%
\begin{lemma}\label{lemma:finalprob}
Let $r_m, r_n \in \cR$ be two requests whose arrival rate follow a Poisson process with parameters $\alpha_m$ and $\alpha_n$, respectively. The probability of each having an occurrence within a maximum time interval $\bar{t}$ is
\begin{equation}\label{eq:lem}
	P(\alpha_m,\alpha_n)  := 1-\frac{\alpha_m e^{-\alpha_n\bar{t}}+\alpha_n e^{-\alpha_m\bar{t}}}{\alpha_m+\alpha_n}.
\end{equation}
\end{lemma}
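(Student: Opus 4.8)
The plan is to reduce the statement to a short computation with two independent exponential random variables. By the Poisson assumption, type-$m$ arrivals occur at rate $\alpha_m$, and the memorylessness of the Poisson process implies that the waiting time until the relevant type-$m$ occurrence is exponentially distributed with rate $\alpha_m$; write $\tau_m \sim \mathrm{Exp}(\alpha_m)$ and, independently, $\tau_n \sim \mathrm{Exp}(\alpha_n)$ for the two arrival times. Two requests can be pooled precisely when their occurrences fall within $\bar{t}$ of one another, so the event of interest is $\{|\tau_m - \tau_n| \le \bar{t}\}$, and $P(\alpha_m,\alpha_n)$ is its probability.

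First I would pass to the complement. The two one-sided events $\{\tau_m - \tau_n > \bar{t}\}$ and $\{\tau_n - \tau_m > \bar{t}\}$ are disjoint and exhaust the complement of $\{|\tau_m - \tau_n| \le \bar{t}\}$ (the boundary has probability zero since the laws are continuous), so
\begin{equation*}
P(\alpha_m,\alpha_n) = 1 - P(\tau_m - \tau_n > \bar{t}) - P(\tau_n - \tau_m > \bar{t}).
\end{equation*}
Each term I would evaluate by conditioning on the smaller variable and integrating the exponential tail $P(\tau_m > s + \bar{t}) = e^{-\alpha_m (s+\bar{t})}$, namely
\begin{equation*}
P(\tau_m - \tau_n > \bar{t}) = \int_0^\infty \alpha_n e^{-\alpha_n s}\, e^{-\alpha_m (s + \bar{t})}\, \mathrm{d}s = \frac{\alpha_n e^{-\alpha_m \bar{t}}}{\alpha_m + \alpha_n},
\end{equation*}
and symmetrically $P(\tau_n - \tau_m > \bar{t}) = \alpha_m e^{-\alpha_n \bar{t}}/(\alpha_m + \alpha_n)$. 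Substituting both back and collecting over the common denominator $\alpha_m + \alpha_n$ yields exactly \eqref{eq:lem}.

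An equivalent and perhaps more transparent route uses superposition: the merged arrival stream is Poisson with rate $\alpha_m + \alpha_n$, and its first event is of type $m$ with probability $\alpha_m/(\alpha_m+\alpha_n)$ and of type $n$ with probability $\alpha_n/(\alpha_m+\alpha_n)$. Conditioned on the first event being type $m$, memorylessness makes the wait for the complementary type-$n$ event $\mathrm{Exp}(\alpha_n)$, so it lands within $\bar{t}$ with probability $1 - e^{-\alpha_n \bar{t}}$, and symmetrically with $m$ and $n$ interchanged. The law of total probability then gives $P = \frac{\alpha_m}{\alpha_m+\alpha_n}\bigl(1 - e^{-\alpha_n \bar{t}}\bigr) + \frac{\alpha_n}{\alpha_m+\alpha_n}\bigl(1 - e^{-\alpha_m \bar{t}}\bigr)$, which simplifies to \eqref{eq:lem} and cross-checks the first derivation.

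The integrations are routine; the only genuine modeling step — and hence the main thing to get right — is the translation of ``temporal feasibility of pooling'' into the event $\{|\tau_m - \tau_n| \le \bar{t}\}$, together with the independence and memorylessness that justify treating $\tau_m$ and $\tau_n$ as independent exponentials rather than, say, asking for both arrivals inside a fixed window (which would produce the different quantity $(1-e^{-\alpha_m\bar{t}})(1-e^{-\alpha_n\bar{t}})$). Once that identification is fixed, both computations above are short and mutually self-checking.
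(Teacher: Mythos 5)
Your proof is correct, and the essential modeling step is exactly the paper's: both arguments reduce the lemma to two independent exponential waiting times $\tau_m\sim\mathrm{Exp}(\alpha_m)$, $\tau_n\sim\mathrm{Exp}(\alpha_n)$ and identify the pooling event as $\{|\tau_m-\tau_n|\le\bar{t}\}$. Where you diverge is in the computation. The paper integrates the joint density \emph{directly} over that region, splitting it into the two cases $t_n\in[0,\bar{t}]$ (inner integral over $[0,t_n+\bar{t}]$) and $t_n\in[\bar{t},\infty)$ (inner integral over $[t_n-\bar{t},t_n+\bar{t}]$), i.e.\ two double integrals, and then waves at ``standard integral calculus techniques'' to reach \eqref{eq:lem}. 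You instead pass to the complement, decompose it into the two disjoint one-sided tails $\{\tau_m-\tau_n>\bar{t}\}$ and $\{\tau_n-\tau_m>\bar{t}\}$, and evaluate each with a single one-dimensional integral
\begin{equation*}
P(\tau_m-\tau_n>\bar{t})=\int_0^\infty \alpha_n e^{-\alpha_n s}\,e^{-\alpha_m(s+\bar{t})}\,\mathrm{d}s=\frac{\alpha_n e^{-\alpha_m\bar{t}}}{\alpha_m+\alpha_n},
\end{equation*}
which avoids the case split entirely and makes the algebra that the paper omits essentially trivial. Your second derivation, via superposition of the two Poisson streams and the competing-exponentials/memorylessness argument, has no counterpart in the paper and is arguably the most illuminating of the three: it requires almost no integration, exposes \emph{why} the answer is a rate-weighted mixture of the terms $1-e^{-\alpha_n\bar{t}}$ and $1-e^{-\alpha_m\bar{t}}$, and serves as an independent cross-check of the tail computation. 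Your closing remark distinguishing $\{|\tau_m-\tau_n|\le\bar{t}\}$ from the fixed-window event with probability $(1-e^{-\alpha_m\bar{t}})(1-e^{-\alpha_n\bar{t}})$ is also well taken, since that identification is the one genuinely model-dependent step and the paper leaves it implicit.
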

\begin{proof}
	The proof can be found in Appendix~\ref{app:1}.
\end{proof}

\subsubsection{Expected Number of Pooled Rides}
\label{sec:STF}
In Section~\ref{sec:SD}, we analyzed the spatial dimension of the ride-pooling problem, whereby we computed the best feasible pooling path given two requests. In Section~\ref{sec:TD}, we analyzed the temporal dimension of the ride-polling problem, whereby we derived the probability of two requests happening within a time window.  By lifting the temporary assumptions made in Section~\ref{sec:SD}, we formulate the ride-pooling demand matrix given a certain pooling assignment, defined in what follows.

A fraction of the demand of every request $r_m\in \cR$ can be assigned to be pooled with a request $r_n \in \cR$. Let ${\beta \in \mathbb{R}_{\geq0}^{|\cR|\times |\cR|}}$ denote the assignment matrix, whose entry $(m,n)$ is the demand of $r_m$ that is assigned to be pooled with $r_n$. 
For the remainder of this subsection we assume that $\beta$ is given. In Section~\ref{sec:alg}, we propose an algorithm to compute the optimal value of $\beta$ under Approximation~\ref{approx}.

From the analysis in Section~\ref{sec:TD}, it is noticeable that only a fraction of the allocated ride-pooling demand $\beta_{mn}$ can actually be pooled due to the aforementioned temporal constraints. Specifically, the probability of pooling is given by $P(\beta_{mn},\beta_{nm})$ according to Lemma~\ref{lemma:finalprob}. Moreover, given that we only consider pooling between two requests, at most, the maximum pooled demand between $r_m,r_n\in \cR$ is  $\min(\beta_{mn},\beta_{nm})$. Therefore, the effective expected pooled demand between two requests $r_m,r_n\in \cR$ is given by ${\gamma_{nm} = \gamma_{mn} := \min(\beta_{mn},\beta_{nm})P(\beta_{mn},\beta_{nm})}$.  As a result, according to the spatial analysis in Section~\ref{sec:SD}, this pooled demand is portrayed by the demand matrix $\gamma_{mn}D^{mn,\star}$. Note that the effective expected pooling demand follows $\sum_{n\in \cM} \gamma_{mn} \leq \alpha_m, \, \forall r_m \in \mathcal{R}$ with equality if the full demand of $r_m$ is pooled.
The full ride-pooling demand matrix $D^\mathrm{rp}$ is made up of two contributions: i)~the sum of the expected pooled active vehicle flows of the form $\gamma_{mn}D^{mn,\star}$ for ${r_m,r_n\in \cR}$; and ii)~the requested demands that were not ride-pooled. Thus, the entry $(i,j)$ of $D^\mathrm{rp}$ can be written as
\begin{equation*}\label{eq:b}
		\!D_{ij}^{\mathrm{rp}} = \begin{cases}
			\sum\limits_{\substack{p,q\in \cM\\ p\geq q}} \gamma_{pq}D_{ij}^{pq,\star} + \left(D_{ij} - \sum\limits_{p\in\cM}\gamma_{mp}D_{ij}^{mp,\star} \right), \\ 
			 \quad \quad  \quad \quad  \quad \quad  \quad \quad \;\; \exists m \in \mathcal{M} : d_m \!= i \land o_m\! = j\\
			- \sum_{k\neq j} D_{kj}^{\mathrm{rp}},  \quad \quad \quad  i  = j \\ 
				\sum\limits_{\substack{p,q\in \cM\\ p\geq q}} \gamma_{pq}D_{ij}^{pq,\star}, \quad \quad   \text{otherwise}.
		\end{cases}
\end{equation*}

Finally, one can input $D^\mathrm{rp}$ to Problem~\ref{prob:rides}, which yields an LP, given a pooling assignment $\beta$.

\subsubsection{Optimal Ride-pooling Assignment}\label{sec:alg}
In this section, we will compute the optimal ride-pooling assignment matrices $\beta^\star$ and $\gamma^\star$, under Approximation~\ref{approx}, leveraging an iterative approach, which is described in what follows.
For every pair of requests $r_m,r_n\in \cR$, we can compute the unitary improvement of the objective function of Problem~\ref{prob:rides}, denoted by $\Delta \tilde{J}_{mn}$, w.r.t. the no-pooling scenario. Specifically, it amounts to the difference  between $\tilde{J}_{nm}$, which denotes the cost with $D^\mathrm{rp} = D^{mn,\star}$, and $\tilde{J}_n+\tilde{J}_m$, which again denotes the cost with $D^\mathrm{rp} = D^{mn,0}$. Let $\alpha_m^\prime, m\in \cM$ stand for an auxiliary variable throughout the iterations and represent the demand of request $r_m$ that has not yet been assigned, and which is initialized as $\alpha_m^\prime = \alpha_m$. Further, the pair of requests with the highest improvement is prioritized with the highest possible pooling demand assignment. That is, in each iteration, if $r_m,r_n\in \cR$  is the pair of requests with the highest $\Delta \tilde{J}_{mn}$, we set $\beta_{mn} = \alpha_m^\prime$ and $\beta_{nm}=\alpha_n^\prime$. Moreover, the rides that have been assigned but not pooled, are added back to the original requests, i.e., we set $\alpha_m^\prime =  \beta_{mn}-\gamma_{mn}$ and $\alpha_n^\prime =\beta_{nm}-\gamma_{nm}$. Let $\Delta \tilde{J}_{mn}^\prime, m,n\in \cM$ denote another auxiliary variable  throughout the iterations, initialized as $\Delta \tilde{J}_{mn}^\prime=  \Delta \tilde{J}_{mn}$. At the end of every iteration, $\Delta \tilde{J}^\prime_{mn}$ is set to  0.   
This procedure is repeated until convergence is achieved, i.e., $\max_{m,n} (\Delta \tilde{J}^\prime_{mn}) \leq 0$. The pseudocode of this procedure is presented in Algorithm~\ref{alg:one}. In the following theorem, we establish the convergence and optimality of Algorithm~\ref{alg:one}.
	
\begin{algorithm}[ht]
	\caption{Compute optimal assignment matrices $\beta^\star, \gamma^\star$.}\label{alg:one}
	\begin{algorithmic}
		\STATE $\tilde{J}_{mn} \leftarrow  \mathrm{input} \;\; D^{mn, \star} \; \text{to Problem}~\ref{prob:rides}, \;\; \forall {m,n\in \cM}$
		\STATE $\tilde{J}_{m} + \tilde{J}_n  \leftarrow  \mathrm{input} \;\; D^{mn, 0} \; \text{to Problem}~\ref{prob:rides}, \;\; \forall {{m,n\in \cM}}$
		\STATE $\Delta \tilde{J}_{mn} \;\leftarrow\; \tilde{J}_{m} + \tilde{J}_{n} - \tilde{J}_{mn} $
		\STATE $\Delta \tilde{J}_{mn}^\prime \;\leftarrow\; \Delta \tilde{J}_{mn}, \; \forall m,n\in \cM$
		\STATE $\alpha_{m}^\prime \;\leftarrow\; \alpha_{m}, \; \forall m\in \cM $
		\WHILE {$ \mathrm{max}_{m,n}(\Delta \tilde{J}^\prime_{mn}) > 0 $}
		\STATE $ (m,n) \in \mathrm{argmax}_{m,n} (\Delta \tilde{J}^\prime_{mn})$
		\IF{$o_n = o_m \; \mathrm{and} \; d_n=d_m$}
		\STATE $ \beta_{mn}  \;\leftarrow\; \alpha_{m}^\prime, \;\beta_{nm} \;\leftarrow\; \beta_{mn}$
		\STATE $ \gamma_{mn} \;\leftarrow\; \beta_{mn}  P(\beta_{mn},\beta_{nm})/2, \;\gamma_{nm} \;\leftarrow\; \gamma_{mn}$
		\ELSE
		\STATE $\beta_{nm}  \;\leftarrow\;  \alpha_{n}^\prime, \; \beta_{mn}  \;\leftarrow\;  \alpha_{m}^\prime$
		\STATE $\gamma_{mn}  \;\leftarrow\;  \min(\beta_{nm},\beta_{mn}) P(\beta_{mn},\beta_{nm})$
		\STATE $\gamma_{nm} \;\leftarrow\;  \gamma_{mn}$
		\ENDIF
		\STATE$\alpha_{m}^\prime  \;\leftarrow\;  \alpha_{m}^\prime - \gamma_{mn},  \; \alpha_{n}^\prime  \;\leftarrow\; \alpha_{n}^\prime - \gamma_{nm} $
		\STATE$\Delta \tilde{J}^\prime_{mn}  \;\leftarrow\; 0, \;   \Delta \tilde{J}^\prime_{nm} \leftarrow \Delta \tilde{J}^\prime _{mn}  $
		\ENDWHILE
	\end{algorithmic}
\end{algorithm}

\begin{theorem}\label{theorem:one}
Let $X^\star_\gamma$ denote the optimal solution of Problem~\ref{prob:rides}, under Approximation~\ref{approx}, for the effective ride-pooling demand matrix $\gamma$. Then, in $\abs{\cM}(\abs{\cM}-1)$ iterations at most, Algorithm~\ref{alg:one} converges to $\beta = \beta^\star$ and $\gamma= \gamma^\star$, which is a minimizer of $\tilde{J}(X^\star_\gamma)$ among all valid effective ride-pooling matrices.
\end{theorem}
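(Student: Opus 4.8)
The plan is to separate the statement into its two assertions---finite termination and global optimality---and to treat both after first rewriting the objective as an affine function of the effective pooling matrix $\gamma$. Under Approximation~\ref{approx} the active-flow cost is obtained by routing every equivalent request of $D^\mathrm{rp}$ along shortest paths, so $\tilde{J}(X^\star_\gamma)$ is linear in $D^\mathrm{rp}$ and hence in $\gamma$. Using the symmetry $\gamma_{mn}=\gamma_{nm}$ together with $\Delta\tilde{J}_{mn}=\tilde{J}_m+\tilde{J}_n-\tilde{J}_{mn}$, I would carry out the bookkeeping of the three cases in the expression for $D^\mathrm{rp}_{ij}$ to obtain the decomposition
\begin{equation*}
\tilde{J}(X^\star_\gamma)=\sum_{m\in\cM}\alpha_m\tilde{J}_m-\sum_{\substack{m,n\in\cM\\ m<n}}\gamma_{mn}\,\Delta\tilde{J}_{mn}.
\end{equation*}
Since the first term is a constant $C_0$ independent of the assignment, minimizing $\tilde{J}(X^\star_\gamma)$ is equivalent to maximizing the total weighted pooled mass $\sum_{m<n}\gamma_{mn}\Delta\tilde{J}_{mn}$ over all valid $\gamma$.

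For termination, I would observe that each pass of the while loop selects a single pair and permanently sets both $\Delta\tilde{J}'_{mn}$ and $\Delta\tilde{J}'_{nm}$ to zero, while no step ever increases an entry of $\Delta\tilde{J}'$. Consequently the number of strictly positive off-diagonal entries is a strictly decreasing integer potential, bounded initially by the $\abs{\cM}(\abs{\cM}-1)$ off-diagonal entries; this yields the claimed iteration bound and guarantees the loop exits once $\max_{m,n}\Delta\tilde{J}'_{mn}\le 0$.

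The optimality claim is the substance of the proof, and I would establish it by an exchange argument on the equivalent maximization problem. The greedy commits, at each step, the entire remaining demand of the current highest-weight pair, producing the per-pair maximal value $\gamma_{mn}=\min(\beta_{mn},\beta_{nm})P(\beta_{mn},\beta_{nm})$, with the equal origin-destination branch handled by its dedicated formula. The key structural fact I would prove first is that $\gamma_{mn}$ is nondecreasing in each of $\beta_{mn},\beta_{nm}$ and that $P(\alpha_m,\alpha_n)$ increases with its arguments, so that \emph{concentrating} pooling demand on a single partner yields a larger effective $\gamma$ per unit assigned than spreading it---this is exactly what makes greedy, rather than an even split, the right move. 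Given any valid $\gamma$, I would then show that redirecting pooling mass away from a lower-weight pair toward the globally highest-weight pair $(m^\star,n^\star)$ never decreases the objective: the gain is weighted by $\Delta\tilde{J}_{m^\star n^\star}=\max_{m,n}\Delta\tilde{J}_{mn}$ while the loss is weighted by a smaller $\Delta\tilde{J}$, and the monotonicity above makes the reallocation feasible and at least mass-preserving. Iterating this exchange in non-increasing weight order transforms any optimum into the greedy solution without decreasing the objective, proving $\gamma^\star$ optimal.

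The main obstacle is this exchange step, because the map $\beta\mapsto\gamma$ is nonlinear through the Poisson pooling probability $P$: a unit of freed demand does not translate into a unit of pooled demand, so the ``exchange rate'' between pairs must be controlled. The crux is to show that the superlinear concentration property of $P$ renders every weight-improving reallocation also mass-favorable, so that committing the top pair's full remaining demand---as the algorithm does---can never be beaten by a more balanced assignment; carefully separating the equal origin-destination branch, with its factor $1/2$, from the generic branch is the remaining technical bookkeeping.
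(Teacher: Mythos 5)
Your termination argument and your affine decomposition of $\tilde{J}(X^\star_\gamma)$ in $\gamma$ are both correct and coincide with what the paper does (the paper states the linearity as $\tilde{J}(\gamma_{nm}X^{mn,0})-\tilde{J}(\gamma_{mn}X^{mn,\star})=\gamma_{mn}\Delta\tilde{J}_{mn}$ and the same marking argument for the iteration bound). The gap is in the optimality step, and it is exactly the one you flag yourself: your exchange argument is carried out in $\beta$-space, where the map $\beta\mapsto\gamma$ is nonlinear through $P$, and you never actually prove the ``superlinear concentration'' property that the exchange needs — you only state that it is the crux. Monotonicity of $P$ in its arguments is not enough. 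Worse, the obstacle is structural, not just technical: each $\gamma_{mn}$ draws down the budgets of \emph{both} endpoints $m$ and $n$, so redirecting mass toward the top-weight pair $(m^\star,n^\star)$ can simultaneously starve two other pairs. This is an assignment-like coupling, and for such problems pairwise exchanges toward the locally best edge are not objective-improving in general (already in the linear case $P\equiv 1$, greedy edge-by-edge allocation is not optimal for weighted fractional matching). So the claim that ``iterating this exchange transforms any optimum into the greedy solution without decreasing the objective'' cannot be established by the tools you set up; as written, the proposal is not a proof.

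The paper avoids this difficulty by never arguing in $\beta$-space at all. It treats $\gamma_{mn}$ itself as the allocated resource: by linearity of $\tilde{J}$, allocating $\gamma_{mn}$ buys an improvement of exactly $\gamma_{mn}\Delta\tilde{J}_{mn}$, i.e., a \emph{constant} per-unit value $\Delta\tilde{J}_{mn}$; the amount allocatable to a pair at any iteration is capped by $\min(\alpha'_m,\alpha'_n)P(\alpha'_m,\alpha'_n)$; and Algorithm~\ref{alg:one} is then read as the continuous-knapsack greedy of Dantzig — at each step, allocate the maximum feasible amount of the resource with the highest per-unit improvement. In that framing the nonlinearity of $P$ is absorbed into the feasibility cap rather than appearing in the objective, so no concentration property of $P$ ever has to be proved. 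If you want to complete your route, the fix is to recast your exchange argument over $\gamma$ (with the caps $\gamma_{mn}\le\min(\alpha'_m,\alpha'_n)P(\alpha'_m,\alpha'_n)$ and $\sum_n \gamma_{mn}\le\alpha_m$ as the feasible set), which essentially collapses it into the paper's knapsack analogy.
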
 
\begin{proof}
		The proof can be found in Appendix~\ref{app:2}. 
\end{proof}

\subsection{Discussion}
A few comments are in order.
The mobility system is analyzed at steady-state, which is unsuitable for an online implementation, but it is appropriate for planning and design~\cite{LukeSalazarEtAl2021,SalazarLanzettiEtAl2019}. 
Then, Problems~\ref{prob:main} and \ref{prob:rides} allow for fractional flows, which is acceptable because of the mesoscopic perspective of the work~\cite{PaparellaChauhanEtAl2023,LukeSalazarEtAl2021,SalazarLanzettiEtAl2019}.
We consider the travel time of each arc to be constant, meaning that the routing strategies do not impact travel time and congestion. Finally, $D^\mathrm{rp}$ is not optimal w.r.t. the objective function of Problem~\ref{prob:rides}, but it is w.r.t. its relaxed version, enabling a polynomial-time computation.
\section{Case Study}\label{sec:res}
This section showcases our modeling and optimization framework in a real-world case study of Sioux Falls, South Dakota, USA, with data obtained from the Transportation Networks for Research repository~\cite{ResearchCoreTeam}. The road network is shown in Fig.~\ref{fig:road}.
\begin{figure}[t]
	\centering
	\includegraphics[trim={0cm 300 0 80},clip,width=\linewidth]{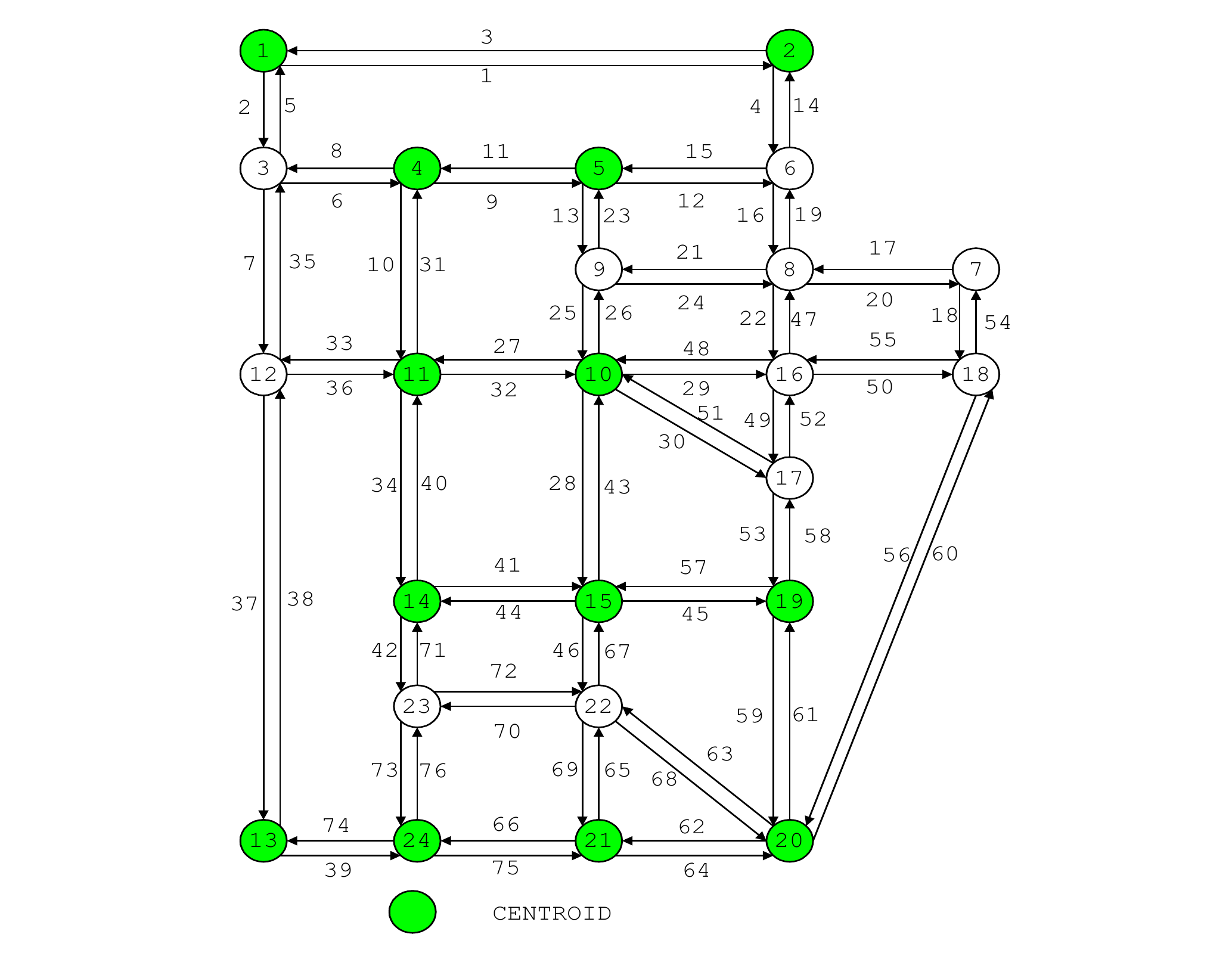}
		\caption{Road network of Sioux Falls, South Dakota, USA. Prepared by Hai Yang and Meng Qiang, Hong Kong University of Science and Technology.}
	\label{fig:road}
\end{figure}
The computations were performed on an Intel core i7-10850H, 32GB RAM. 
Problems~\ref{prob:rides} was parsed with YALMIP~\cite{Loefberg2004} and solved with Gurobi 9.5~\cite{GurobiOptimization2021}.
The computation of the matrices $D^{mn,\star}$ with $m,n \in \cM$ required less than 10 wall-clock minutes. Each instance of Problems~\ref{prob:rides} took less than 1 minute to solve. We compute it for a varying amount of hourly demands, waiting times and experienced delays considering the optimal ride-pooling assignment of the relaxed problem, obtained as described in Section~\ref{sec:alg}. Then, we compare the objectives of Problem~\ref{prob:main} and \ref{prob:rides}, i.e. the overall travel time.  Fig.~\ref{fig:WD} shows that ride-pooling always contributes to lowering the overall travel time. In particular, the larger the number of hourly demands, the larger the difference with respect to the no-pooling scenario. The reason is that the probability function in~\eqref{eq:lem} is monotonically increasing w.r.t. $\beta_m$ and $\beta_n$ that in turn, are monotonically increasing with the number of demands. 
\begin{figure}[t]
	\centering
	\includegraphics[width=\linewidth]{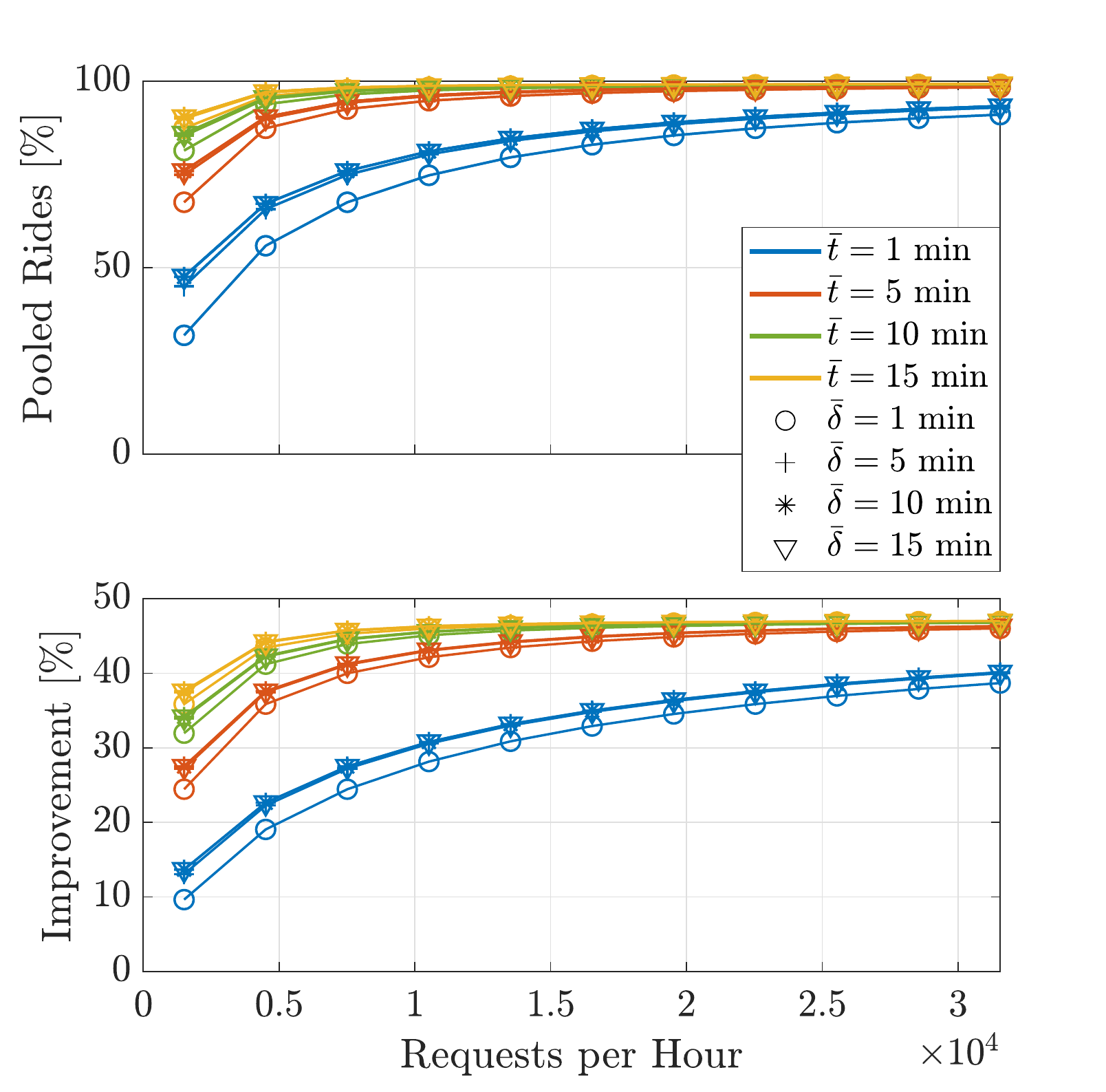}
	\caption{Percentage of pooled rides, objective function of Problem~\ref{prob:rides}, and improvement w.r.t. no pooling as a function of the overall number of hourly demands, waiting time, and experienced delay.}
	\label{fig:WD}
\end{figure}
\begin{figure}[t]
	\centering
	\includegraphics[width=\linewidth]{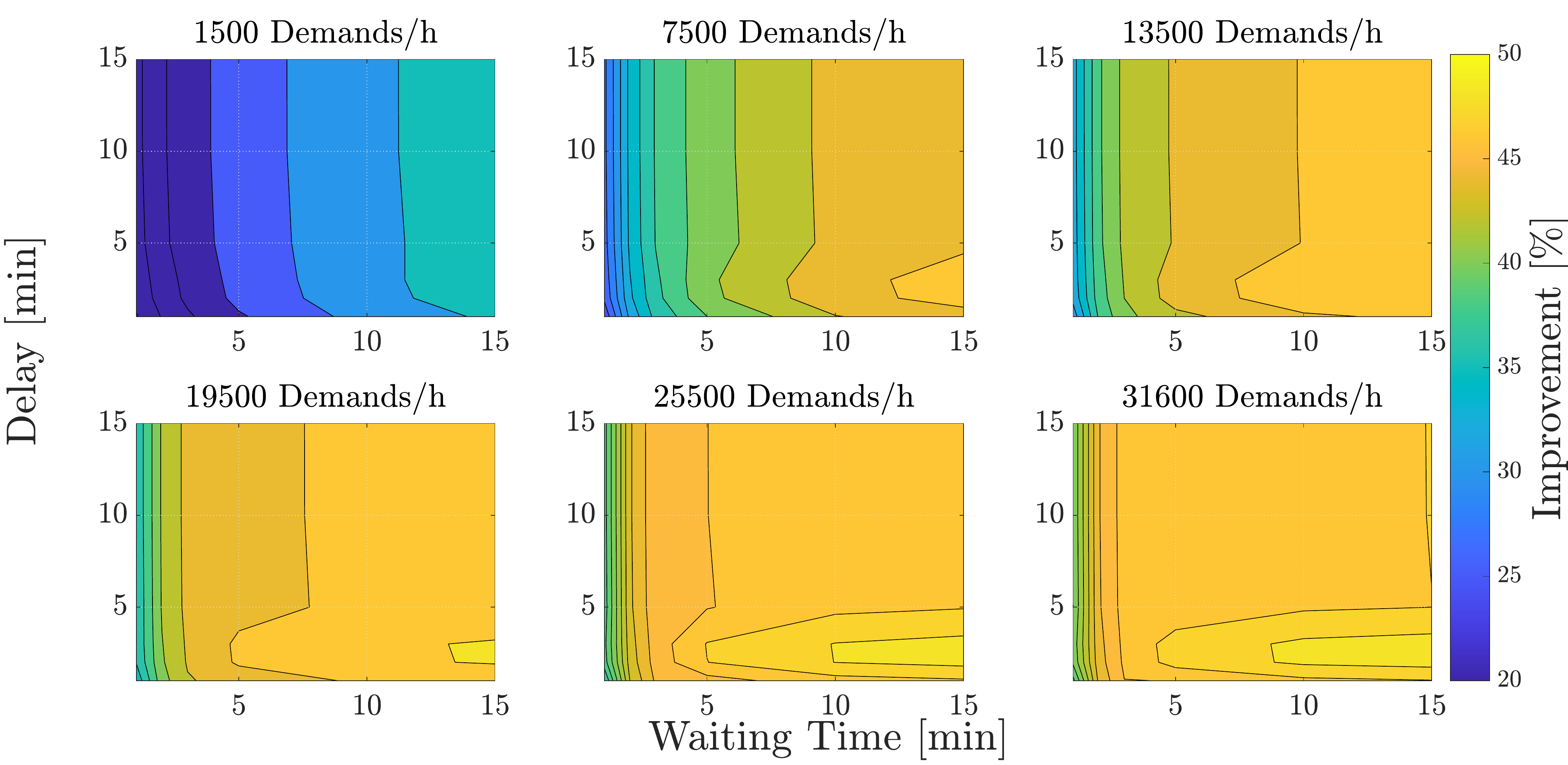}
	\caption{Improvement of $J$ with ride-pooling w.r.t. no ride-pooling,  as a function of maximum waiting time and delay.}
	\label{fig:surf}
\end{figure}%
We also note that the percentage of rides that are pooled is strongly influenced by the number of demands, to a lower extent by the maximum waiting time, and marginally by the maximum delay. In fact, for large demands, both the waiting time and the delay have a minor impact on the percentage of rides being pooled and on the costs, as shown in Fig.~\ref{fig:WD}. This phenomenon resembles the Mohring Effect~\cite{FielbaumTirachiniEtAl2021}, stating that the more people use a mobility service, the lower the waiting time they experience. 
We can also find the same effect in Fig.~\ref{fig:surf}, which shows that the higher the number of demands per unit time, the lower the delay and waiting time to obtain the same improvement with respect to the no-pooling scenario. Moreover, it is usually more beneficial to increase the waiting time of users rather than increase the delay, so that less distance is driven by the fleet, reducing the costs.

\section{Conclusions}\label{sec:conc}
This paper presented a framework to capture ride-pooling in a time-invariant network flow model. Specifically, we proposed a framework wherein we devise an equivalent set of requests w.r.t. to the original set so that the structure of the traffic flow  problem remains unchanged. This allows to still obtain an LP problem that can be efficiently solved with off-the-shelf solvers in polynomial-time. Additionally, we proposed a method to compute a ride-pooling request assignment, that is optimal w.r.t. a relaxed version of the minimum travel time problem. Our case study of Sioux Falls, USA, quantitatively showed that the overall number of requests per unit time is a crucial factor to assess the benefit of ride-pooling in mobility-on-demand system. In fact, we achieved average improvements from 25\% to 45\% for an increasing number of requests.  We also showed that, for a large number of requests, more than 90\% of them could be pooled with a relatively short waiting and delay time.



In the future, we would like to analyze the results with respect to the granularity of the road graph. Moreover, we would like to build on this research by including endogenous traffic congestion and by applying this method to other problems that can be approached with linear time-invariant traffic flow models. 


\section*{Statement of Code Availability}
A MATLAB implementation of the methods presented is available in an open-source repository at {\small \url{https://github.com/fabiopaparella/ride-pooling-MoD}}.

\section*{Acknowledgments}\label{Sec:akn}
We thank Dr. I. New, F. Vehlhaber, and J. Kampen for proofreading the paper. This publication is part of the
project NEON with number 17628 of the research
program Crossover, partly financed by the Dutch
Research Council.

\bibliographystyle{IEEEtran}
\bibliography{main.bib,SML_papers.bib}
\appendices
\section{Proof of Lemma~\ref{lemma:finalprob}}\label{app:1}


Recall that the exponential distribution, whose probability density function is given by $f(x)= \alpha e^{-\alpha x}$, models the time between events in a Poisson process of parameter $\alpha$. Since the two Poisson processes are independent,
\begin{equation*}
	\begin{split}
		P(\alpha_m,\alpha_n) = & \int_0^{\bar{t}} \alpha_n  e^{-\alpha_n  t_n} \left( \int_0^{t_n+\bar{t}} \alpha_m  e^{-\alpha_m t_m} \mathrm{d}t_m \right) \mathrm{d}t_n + \\
		& \int_{\bar{t}}^{\infty} \alpha_n e^{-\alpha_n t_n} \left(\int_{t_n-\bar{t}}^{t_n+\bar{t}} \alpha_m e^{-\alpha_m t_m}  \mathrm{d}t_m \right)  \mathrm{d}t_n,
	\end{split}
\end{equation*}
where the presence of two terms arises from the fact that the time interval $[0,+\infty)$ is considered. Making use of standard integral calculus techniques, it can be rewritten as \eqref{eq:lem}.

\section{Proof of Theorem~\ref{theorem:one}}\label{app:2}

The convergence of Algorithm~\ref{alg:one} in at most  ${\!\abs{\cM}(\abs{\cM}\!-\!1)\!}$ iterations is immediate. In fact, since for each pair $(m,n)$ chosen in each iteration we set $\Delta \tilde{J}^\prime_{mn} \!= \!\Delta \tilde{J}^\prime_{nm} \!= \!0$, neither $(n,m)$ nor $(m,n)$ will be chosen again. The optimality of the solution $\beta^\star$ and associated $\gamma^\star$ is carried out making use of an analogy with the continuous Knapsack problem,  which can be solved by a well-known polynomial-time greedy algorithm~\cite{Dantzig1957}. Recall that such algorithm consists in, every iteration, allocating the maximum amount of the resource with the highest improvement in the objective function per unit of  the resource, which is intuitively evident.  Similarly to the continuous Knapsack problem, the goal is to minimize $\tilde{J}(X^\star_\gamma)$ by allocating ${\gamma_{mn} \geq 0}$ with $m,n \in \cM$. First, borrowing the notation from Section~\ref{sec:SD}, if $\gamma_{mn}$ is assigned, then the corresponding decrease in the cost function amounts to $\tilde{J}(\gamma_{nm}X^{mn,0})\! -\!\tilde{J}(\gamma_{mn}X^{mn,\star}) =\gamma_{mn}(\tilde{J}(X^{mn,0})\! -\!\tilde{J}(X^{mn,\star})) = \gamma_{mn}\Delta{\tilde{J}}_{mn}$, where the linearity of $\tilde{J}$ played a key role. Thus, the allocation of $\gamma_{mn}$ leads to a relative improvement on the cost that amounts to $\Delta{\tilde{J}}_{mn}$. Second, as pointed out in Section~\ref{sec:alg}, throughout the algorithm, $\alpha^\prime_m$ corresponds to the demand of $r_m$ which has not yet been ride-pooled with another request. Thus, the value of $\gamma_{mn}$ that can be allocated has an upper bound given by $\gamma_{mn} \leq \min(\alpha_m^\prime, \alpha_n^\prime)P(\alpha_m^\prime,\alpha_n^\prime)$. Note that Algorithm~\ref{alg:one} corresponds to allocating the maximum amount of $\gamma_{mn}$, where $m$ and $n$ are such that, at each iteration, the highest positive relative improvement in the objective function is achieved, i.e., $(m,n) \in \mathrm{argmax}_{m,n} (\Delta \tilde{J}^\prime_{mn})$, which shows its optimality.

\end{document}